\newtheorem{theorem}{Theorem}
\newtheorem{corollary}{Corollary}
\newtheorem{lemma}[theorem]{Lemma}
\theoremstyle{definition}
\newtheorem{definition}[theorem]{Definition}
\theoremstyle{remark}
\newcommand{\NN}{\ensuremath{\mathbb{N}}}
\newcommand{\CC}{\ensuremath{\mathbb{C}}}
\newcommand{\RR}{\ensuremath{\mathbb{R}}}
\newcommand{\norm}[2][]{\| \, {#2} \,\|_{#1}}
\newcommand{\abs}[1]{\lvert{#1}\rvert}
\newcommand{\card}[1]{\mbox{card}({#1})}
\newcommand{\la}{l}
\newcommand{\e}{e}
\title{Relatively compact sets in the reduced $C^{\ast}$-algebras of Coxeter groups}
\author{Gero Fendler}
\address{NuHAG\\
Faculty of Mathematics\\
University of Vienna\\
Nordbergstasse 15\\
1090 Vienna\\
Austria}
\email{gero.fendler<at>univie.ac.at}
\begin{document}

\begin{abstract}
We characterize relatively norm compact sets in the 
regular $C^*$-algebra of finitely generated Coxeter groups
using a geometrically defined positive semigroup acting on the algebra.
\end{abstract}

\subjclass[2010]{Primary: 43A99, Secondary: 47L30, 46L57}
\keywords{noncommutative Arzela-Ascoli theorem, regular $C^*$-algebra, Coxeter group}
\maketitle

\section{Introduction}

Let $(X,d)$ be a compact metric space, $x_0\in X$.
In $C(X)$, the continuous complex valued functions on X, 
consider the convex, balanced and closed set

\[\mathcal{K}=\{ f : \abs{f(x)-f(y)}\leq d(x,y), f(x_0)=0\}.\]

The Arzela-Ascoli theorem shows that 
$\mathcal{K}$ is relatively compact.
On the other hand this theorem can be thought to compare
any relatively compact set against this special set.
\par
 In the non-commutative context this has been made precise 
by Antonescu and Christensen~\cite{AtChr} as follows: 

Let $A$ be a unital, separable  $C^*$-algebra and $\mathcal{S}$ the set of its states endowed with the $w^*$-topology.
\begin{definition}
$\mathcal{K}\subset A$ is called a metric set if
it is convex, balanced {\em norm compact } and {\em separates the states} of $A$
\end{definition}

\begin{lemma}[\cite{AtChr}]
If $\mathcal{K}\subset A$ is a metric set then
\[d(\varphi,\psi):= \sup_{x\in\mathcal{K}}\abs{\varphi(x)-\psi(x)},\quad \varphi,\psi\in\mathcal{S}\]
defines a metric on $\mathcal{S}$, which generates the $w^*$-topology.
\end{lemma}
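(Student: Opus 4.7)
The plan is to verify the three metric axioms for $d$, then show that $d$ generates the $w^*$-topology on $\mathcal{S}$ by a compactness argument. For the metric axioms, norm compactness of $\mathcal{K}$ makes it bounded, so $d(\varphi,\psi)\leq 2\sup_{x\in\mathcal{K}}\|x\|<\infty$; symmetry and the triangle inequality are immediate from the supremum definition. Positive definiteness is where the separation hypothesis enters: $d(\varphi,\psi)=0$ means $\varphi=\psi$ on $\mathcal{K}$, which by assumption implies $\varphi=\psi$ on all of $A$.

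To compare the two topologies on $\mathcal{S}$, I would invoke the principle that a continuous bijection from a compact space onto a Hausdorff space is a homeomorphism. Since $A$ is unital, $\mathcal{S}$ is $w^*$-compact by Banach--Alaoglu, while $(\mathcal{S},d)$ is metric and therefore Hausdorff, so it suffices to prove that the identity map $(\mathcal{S},w^*)\to(\mathcal{S},d)$ is continuous. As $A$ is separable, $\mathcal{S}$ is $w^*$-metrizable, and I may argue sequentially: it is enough to show that $\varphi_n\to\varphi$ in $w^*$ forces $d(\varphi_n,\varphi)\to 0$.

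This last implication is the heart of the proof, and the one place where norm compactness of $\mathcal{K}$ is really used. Each state has norm $1$, so the family $\{\varphi_n,\varphi\}$ is uniformly $1$-Lipschitz on $A$ and in particular equicontinuous. Given $\varepsilon>0$, I would cover the compact set $\mathcal{K}$ by finitely many balls of radius $\varepsilon/3$ centred at $x_1,\dots,x_N\in\mathcal{K}$, select $n_0$ so that $|\varphi_n(x_j)-\varphi(x_j)|<\varepsilon/3$ for all $j$ and all $n\geq n_0$, and combine these pointwise estimates with the Lipschitz bound via the triangle inequality to obtain $|\varphi_n(x)-\varphi(x)|<\varepsilon$ uniformly for $x\in\mathcal{K}$. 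This noncommutative Arzel\`a--Ascoli step is the only substantive part of the argument; everything else is bookkeeping.
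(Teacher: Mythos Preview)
Your argument is correct. The paper itself does not prove this lemma: it is quoted from Antonescu and Christensen~\cite{AtChr} and stated without proof, so there is no in-paper argument to compare against. Your verification of the metric axioms is fine (boundedness from compactness, separation of states for nondegeneracy), and your topological comparison via the compact-to-Hausdorff principle is the standard route. The one place worth a remark is that you invoke separability of $A$ to reduce to sequences; this is legitimate here since the paper explicitly assumes $A$ is unital and separable, but the same $\varepsilon/3$ covering argument works verbatim with nets, so separability is not essential to the proof of the lemma itself.
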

Their {\em general non-commutative Arzela-Ascoli Theorem}
reads as follows
\begin{theorem}[\cite{AtChr}]
Let $A$ be a unital $C^*$-algebra  $\mathcal{K}\subset A$  a metric set then
$\mathcal{H}\subset A$ is relatively compact if and only if
$\mathcal{H}$ is bounded and for all $\epsilon>0$ exists $N>0$ such that
\[\mathcal{H}\subset B_{\epsilon}(0) + N\mathcal{K}+\CC\mbox{Id},\]
where $B_{\epsilon}(0)\subset A$ is the ball of radius $\epsilon$ around $0$.
\end{theorem}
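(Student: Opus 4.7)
Assume $\mathcal{H}$ is bounded with $\|h\|\le M$ on $\mathcal{H}$, fix $\epsilon>0$, and take $N$ from the hypothesis. The key observation is that in any decomposition $h=b+Nk+\lambda\,\mathrm{Id}$ with $\|b\|<\epsilon$ and $k\in\mathcal{K}$, the scalar is automatically controlled: $|\lambda|=\|\lambda\,\mathrm{Id}\|\le M+\epsilon+N\sup_\mathcal{K}\|\cdot\|$. So one can restrict $\lambda$ to a bounded disk $D\subset\CC$, making $N\mathcal{K}+D\cdot\mathrm{Id}$ the continuous image of the compact set $\mathcal{K}\times D$, hence compact in $A$. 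A finite $\epsilon$-net for this compact set, combined with the $\epsilon$-error in the decomposition, yields a $2\epsilon$-net for $\mathcal{H}$; since $\epsilon$ is arbitrary and $A$ is complete, $\mathcal{H}$ is relatively compact.

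\textbf{Necessity.} Boundedness is immediate. For the covering, fix $\epsilon>0$ and use total boundedness of $\mathcal{H}$ to obtain an $(\epsilon/2)$-net $h_1,\ldots,h_n\in\mathcal{H}$. If each $h_i$ lies within $\epsilon/2$ of $N_i\mathcal{K}+\CC\,\mathrm{Id}$, the choice $N=\max_iN_i$ works because $N_i\mathcal{K}\subset N\mathcal{K}$. So the theorem reduces to the density claim that $\bigcup_{N>0}(N\mathcal{K}+\CC\,\mathrm{Id})$ is norm-dense in $A$. Convexity and balancedness of $\mathcal{K}$ give $N\mathcal{K}+M\mathcal{K}\subset(N+M)\mathcal{K}$, so this union is exactly the linear subspace $\mathrm{span}_{\CC}\mathcal{K}+\CC\,\mathrm{Id}$.

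\textbf{Density.} This is where the separation hypothesis is consumed. Suppose toward contradiction that a nonzero $f\in A^*$ vanishes on both $\mathcal{K}$ and $\mathrm{Id}$. The Jordan decomposition of bounded functionals on a $C^*$-algebra writes $f=\alpha(\varphi_+-\varphi_-)+i\beta(\psi_+-\psi_-)$ with states $\varphi_\pm,\psi_\pm$ and $\alpha,\beta\ge 0$, the condition $f(\mathrm{Id})=0$ balancing the positive masses in each part. Since $f\ne 0$, at least one of the state pairs is distinct; the separation hypothesis on $\mathcal{K}$ produces $x\in\mathcal{K}$ with a nonzero value of the corresponding state difference. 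The main obstacle is cleanly extracting the contradiction $f(x)\ne 0$ when both $\alpha,\beta>0$; the standard route, available in the paper's concrete setting, is to exploit self-adjointness of $\mathcal{K}$ to reduce the claim to a real Hahn--Banach separation on $A_{\mathrm{sa}}$, where the contradiction becomes transparent.
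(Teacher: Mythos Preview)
The paper does not prove this theorem; it is quoted from \cite{AtChr} and used as a black box, so there is no argument in the paper to compare yours against.

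On the substance of your attempt: the sufficiency direction is clean and correct, and your reduction of necessity to the density of $\operatorname{span}_{\CC}\mathcal{K}+\CC\,\mathrm{Id}$ in $A$ is also sound. The honest pause you take at the density step is warranted---in fact the gap is real. With only the hypotheses recorded in the paper's Definition (convex, balanced, norm compact, separates states), density can fail outright. In $A=\CC^3$ take $x=(1,\omega,\omega^2)$ with $\omega=e^{2\pi i/3}$ and $\mathcal{K}=\{zx:|z|\le 1\}$; one checks that $p\mapsto p_1+p_2\omega+p_3\omega^2$ is injective on the probability simplex, so $\mathcal{K}$ is a metric set, yet $\operatorname{span}_{\CC}\mathcal{K}+\CC\,\mathrm{Id}$ is only two--dimensional and the compact singleton $\{(1,0,0)\}$ violates the ``only if'' direction for every sufficiently small $\epsilon$.

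So either the paper has slightly under-quoted the hypotheses of \cite{AtChr}, or an implicit self-adjointness assumption is in force. Your proposed fix---pass to $A_{\mathrm{sa}}$ using $\mathcal{K}=\mathcal{K}^*$---is precisely the missing ingredient, and it does hold for the concrete $\mathcal{K}$ of Theorem~\ref{thm:main} (since $l(g^{-1})=l(g)$ forces $\lambda(f)^*\in\mathcal{K}$ whenever $\lambda(f)\in\mathcal{K}$). Under that assumption your Hahn--Banach argument closes: if $f$ vanishes on $\mathcal{K}=\mathcal{K}^*$ and on $\mathrm{Id}$, then so do its hermitian parts, and a hermitian functional annihilating $\mathrm{Id}$ is a scalar multiple of a difference of two states which, by separation, must coincide.
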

Our aim here is to 
give an example of some such set $\mathcal{K}$ in the reduced 
$C^{\ast}$-algebra $A=C^{\ast}_{\lambda}(G)$ of a finitely generated  Coxeter group
$G$.
\par
Let $G,S$ be a Coxeter group and $l$ the length function associated to the generating set $S$. (For the readers convenience in the next two  sections we recall some 
notions and 
assertions related to the regular $C^*$-algebra of Coxeter groups.)
\begin{theorem}\label{thm:main}
\[\mathcal{K}=\left\{\lambda(f): \norm{\lambda(f)}\leq 1 
\mbox{ and }
\norm{\lambda( \la \cdot f)} \leq 1 \right\}\]
is relatively compact in $C^{\ast}_{\lambda}(G)$.
\end{theorem}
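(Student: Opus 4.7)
The plan is to establish total boundedness of $\mathcal{K}$ by combining an $\ell^2$-precompactness extracted from the defining inequalities with a semigroup approximation argument.

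First, transfer the hypotheses to $\ell^2(G)$. For $\lambda(f) \in \mathcal{K}$, the identity $\lambda(h)\delta_e = h$ (where $\delta_e$ is the canonical basis vector at the identity of $G$) yields $\|h\|_2 \leq \|\lambda(h)\|$; applied to $h = f$ and $h = l\cdot f$, this gives $\|f\|_2 \leq 1$ and $\sum_g l(g)^2 |f(g)|^2 \leq 1$. The second bound implies the uniform tail estimate $\sum_{l(g) > N} |f(g)|^2 \leq (N+1)^{-2}$, and since $G$ is finitely generated each ball $\{g : l(g) \leq N\}$ is finite. A standard truncation/diagonal argument then shows that $\mathcal{K}_0 := \{f : \lambda(f) \in \mathcal{K}\}$ is compact in $\ell^2(G)$.

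Second, introduce the semigroup announced in the abstract. By Bo\.zejko--Januszkiewicz--Spatzier, the length function $l$ is conditionally negative definite on any Coxeter group, so the rule $T_t(\lambda_g) := e^{-tl(g)} \lambda_g$ extends to a completely positive contractive semigroup $(T_t)_{t \geq 0}$ on $C^*_\lambda(G)$. Setting $\phi_t(g) := (1 - e^{-tl(g)})/l(g)$ for $g \neq e$ and $\phi_t(e) := t$, the integral representation $\phi_t = \int_0^t e^{-sl}\,ds$ yields the Fourier multiplier $M_{\phi_t} = \int_0^t T_s\,ds$, an operator of norm at most $t$. Since $\lambda(f) - T_t\lambda(f) = M_{\phi_t}(\lambda(l\cdot f))$, we obtain the uniform equicontinuity
\[\sup_{\lambda(f) \in \mathcal{K}} \|\lambda(f) - T_t\lambda(f)\| \leq t.\]

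The main obstacle is to show that $T_t\mathcal{K}$ is relatively compact in $C^*_\lambda(G)$ for each fixed $t > 0$; this is where the specific geometry of Coxeter groups enters. The natural route is a Haagerup-type (rapid decay) inequality bounding the $C^*$-norm of a function supported on a length-sphere by a polynomial in the length times its $\ell^2$-norm. Combined with the exponential decay of $e^{-tl}$, this implies that the map $\Phi_t : \ell^2(G) \to C^*_\lambda(G)$ defined by $\Phi_t(h) := \lambda(e^{-tl}\cdot h)$ is bounded; consequently $T_t\mathcal{K} = \Phi_t(\mathcal{K}_0)$ is the continuous image of the compact set $\mathcal{K}_0$, hence compact. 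Given $\varepsilon > 0$, choosing $t < \varepsilon$ shows that $\mathcal{K}$ lies in the $\varepsilon$-neighborhood of the totally bounded set $T_t\mathcal{K}$, proving that $\mathcal{K}$ is totally bounded and therefore relatively compact in $C^*_\lambda(G)$.
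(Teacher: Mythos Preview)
Your argument is correct and shares its skeleton with the paper's: both use the contraction semigroup $T_t\lambda(f)=\lambda(e^{-tl}f)$ together with the generator identity to obtain $\|\lambda(f)-T_t\lambda(f)\|\le t$ uniformly on $\mathcal{K}$, and both then invoke rapid decay for Coxeter groups to show that $T_t\mathcal{K}$ is totally bounded. The only difference is in the packaging of this last step. The paper truncates $\varphi_t=e^{-tl}$ to the ball of radius $n$ and uses the multiplier lemma (a corollary of rapid decay) to bound $\|\lambda(\varphi_t f)-\lambda(\varphi_{n,t} f)\|\le C\sup_{l>n}e^{-tl}(1+l)^k$, landing directly in a bounded finite-dimensional set; you instead factor $T_t$ as a bounded linear map $\Phi_t:\ell^2(G)\to C^*_\lambda(G)$ applied to the $\ell^2$-precompact set $\mathcal{K}_0$. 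Your route is conceptually tidy but slightly redundant: it spends the hypothesis $\|\lambda(l\cdot f)\|\le 1$ twice (once for the semigroup estimate, once for the $\ell^2$ tail), whereas the paper's truncation needs only $\|\lambda(f)\|\le 1$ at that stage. Indeed, your $\Phi_t$ is a norm-limit of finite-rank operators and hence compact, so mere $\ell^2$-boundedness of $\mathcal{K}_0$ already suffices---which collapses your argument back to the paper's. One small slip: your first paragraph shows $\mathcal{K}_0$ is \emph{relatively} compact (totally bounded) in $\ell^2$, not compact, since closedness is not verified; but relative compactness is all the rest of the proof uses.
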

The proof of this theorem is given in our last section.
\par
Since the set $\mathcal{K}$ in $C^{\ast}_{\lambda}(G)$ separates the states,
is convex and balanced an application
of the theorem of Antonescu and Christensen  characterizes
relatively compact subsets of $C^{\ast}_{\lambda}(G)$ as follows:
\begin{corollary}
A set $\mathcal{H}\subset C^{\ast}_{\lambda}(G)$ is relatively compact
if and only if it is bounded and for all $\epsilon>0$ there is $m\in\NN$
such that
\[ \mathcal{H}\subset m\mathcal{K}+ \CC\lambda(\delta(e))+B_{\epsilon}(0),\]
where $B_{\epsilon}(0)\subset C^{\ast}_{\lambda}(G)$ is the ball of radius 
$\epsilon$ and center $0$.
\end{corollary}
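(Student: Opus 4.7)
The corollary is a direct application of the Antonescu--Christensen noncommutative Arzel\`a--Ascoli theorem stated above, so the plan is simply to verify that the set $\mathcal{K}$ of Theorem~\ref{thm:main} is a metric set in their sense and that $\lambda(\delta(e))$ plays the role of the unit.

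I would first note that $\lambda(\delta(e))$ acts as the identity operator on $\ell^{2}(G)$ and is therefore the unit of $C^{\ast}_{\lambda}(G)$, so the term $\CC\,\mathrm{Id}$ in the general theorem coincides with $\CC\lambda(\delta(e))$ in the statement. Convexity of $\mathcal{K}$ is immediate, since it is the intersection of two closed balls for the seminorms $\lambda(f)\mapsto\norm{}{\lambda(f)}$ and $\lambda(f)\mapsto\norm{}{\lambda(l\cdot f)}$; balancedness follows from $z\lambda(f)=\lambda(zf)$ together with the homogeneity of both norms; and closedness follows from the non-strict defining inequalities. Combined with Theorem~\ref{thm:main}, which asserts that $\mathcal{K}$ is relatively compact, closedness upgrades this to genuine norm-compactness.

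The only point requiring a small argument is that $\mathcal{K}$ separates the states of $C^{\ast}_{\lambda}(G)$. For each $s\in G$ with $l(s)\geq 1$ the element $\frac{1}{l(s)}\lambda(\delta_{s})$ lies in $\mathcal{K}$, because $\lambda(\delta_{s})$ is unitary and the weighted norm $\norm{}{\lambda(l\cdot\frac{1}{l(s)}\delta_{s})}$ equals $1$; and $\lambda(\delta(e))\in\mathcal{K}$ trivially. Hence the linear span of $\mathcal{K}$ contains $\{\lambda(\delta_{s}):s\in G\}$ and is therefore norm-dense in $C^{\ast}_{\lambda}(G)$, so any two distinct states must disagree on some element of $\mathcal{K}$.

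Applying the Antonescu--Christensen theorem to $\mathcal{K}$ now yields the stated inclusion with a real scalar $N>0$ in place of $m$. Since $\mathcal{K}$ is balanced, $N\mathcal{K}\subset m\mathcal{K}$ for any integer $m\geq N$, so taking $m=\lceil N\rceil$ gives the formulation in the corollary. The whole argument is essentially mechanical; the substantive work has already been done in Theorem~\ref{thm:main}, and the state-separation check is the only place where the specific structure of $C^{\ast}_{\lambda}(G)$ and the length function enters this corollary.
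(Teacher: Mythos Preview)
Your proposal is correct and follows exactly the route the paper itself indicates: the paper derives the corollary in a single sentence by observing that $\mathcal{K}$ is convex, balanced, separates states, and (by Theorem~\ref{thm:main}) relatively compact, and then invoking the Antonescu--Christensen theorem. You supply the details the paper omits---in particular the explicit verification that $\mathcal{K}$ separates states via the elements $\frac{1}{l(s)}\lambda(\delta_s)$, and the passage from a real $N$ to an integer $m$---so your argument is in fact more complete than the paper's own justification.
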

\section{Coxeter group}
\begin{definition}
A pair $(G,S)$ is a Coxeter group if,
$S$  is a finite generating subset of the group $G$ with the following {{presentation}}:
\begin{eqnarray*}
{s^2}&{=}&{\e} \quad {s\in S,}\\
{(st)^{m(s,t)}}&{=}&{\e} \quad {s,t \in S, \ s\neq t,}
\end{eqnarray*}
where ${m(s,t) \in \{2,3,4, \ldots, \infty\}}$.
\end{definition}

A specific tool for working with Coxeter groups is their
{\em geometric representation}.

Let
${V}{= \oplus_{s\in S} \RR \alpha_s}$ be an abstract real vector space with basis
${\{\alpha_s : s \in S\}}$.
Define a bilinear form on it:
${B(\alpha_s,\alpha_t)= %
\left\{\begin{array}{ll}
1&s=t\\
- \cos \frac{\pi}{m(s,t)}&m(s,t)\neq \infty\\%
-1&m(s,t) = \infty
\end{array} \right.}$.
For ${s\in S}$ define a reflection by
${\sigma_s \xi} {= \xi - 2 B(\alpha_s,\xi) \alpha_s}$.
Then
\begin{description}
\item[{$\circ$}]
${V= \RR\alpha_s \oplus H_s}$, where\\ 
${H_s=\{\xi:B(\alpha_s,\xi)=0\}}$ is stabilized point wise
by ${ \sigma_s}$
and ${ \sigma_s \alpha_s = - \alpha_s}$.
\item[{$\circ$}]
${s \mapsto \sigma_s}$ extends multiplicatively to a representation
${\sigma : G \rightarrow {\rm Gl}(V)}$
of the Coxeter group.
\item[{$\circ$}] ${\sigma}$ is faithful and ${\sigma(G)}$ a 
discrete subgroup of ${{\rm Gl}(V)}$.
\end{description}

{We dualise the representation $\sigma$ to obtain the adjoint representation}
\[{\sigma^{\ast}(g)f \ (\xi)= f ( \sigma(g^{-1}\xi)),\ %
f\in V^{\ast},  \xi \in V}\]

For $s\in S$ let $Z_s$ be the hyperplane
${Z_s=\{f\in V^{\ast}: f(\alpha_s)=0\}}$, and
$A_s$ the halfspace
${A_s=\{f\in V^{\ast}: f(\alpha_s)>0\}}$; 
define a family of Hyperplanes in ${V^{\ast}}$
${\mathcal{H}= \cup_{g\in G}\ gZ_s}$.
Denote ${C= \cap_{s\in S}\ A_s}$ the intersection of the halfspaces,
its closure ${D= \overline{C}\setminus \{0\}}$ is called the 
{\em fundamental chamber} usually considered as a subset of the union of its 
translates ${U=\cup_{g\in G}\ gD}$, the {\em Tit's cone}. 

The following facts hold true:
\begin{description}
\topsep0cm
\partopsep0cm
\itemsep0cm
\item[(i)]%
${C}$ is a simplicial cone, its faces are the sets ${Z_s\cap D}$.
\item[(ii)]%
${U}$ is a convex cone, ${D}$ a fundamental domain for the action of
${G}$ on it.
\item[(iii)]%
a closed line segment ${[u,c] \subset U}$ meets only finitely many members
of ${\mathcal{H}}$.
\item[(iv]%
Moreover, for any ${c\in C}$:\\
${\card{ \{ Z \in \mathcal{H} : [gc,c]\cap Z \neq \emptyset\}} = \la(g)}$,
\end{description}

where
 ${\la(g) = \inf \{k: g= s_1 \ldots s_k, \ s_i \in S\}}$ denotes the
usual length with respect to the generating set ${S}$.
This construction due to Tits was used by 
Bo{\.z}ejko, M. and Januszkiewicz, T. and Spatzier, R. J., 
we recall their short proof of their theorem
\begin{theorem}[\cite{BJS}]\label{thm:length}
For $t>0$ 
\[\varphi_t:g\mapsto e^{-t \la (g)}\]
is a positive definite function on $G$. 
\end{theorem}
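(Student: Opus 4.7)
The plan is to invoke Schoenberg's theorem: on a group $G$, a real-valued function $\psi$ with $\psi(e)=0$ and $\psi(g^{-1})=\psi(g)$ has $e^{-t\psi}$ positive definite for every $t>0$ if and only if $\psi$ is conditionally negative definite (CND), i.e.\ $\sum_{i,j} c_i \overline{c_j}\, \psi(g_j^{-1}g_i)\le 0$ whenever $\sum c_i=0$. Since $\ell(e)=0$ and $\ell(g^{-1})=\ell(g)$, it suffices to check that $\ell$ is CND. I expect this reduction to be the clean half; the content lies in exhibiting the CND structure geometrically via the Tits cone.

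To produce the CND structure, fix $c\in C$. For each hyperplane $H\in\mathcal{H}$, choose a halfspace determined by $H$ and set
\[
f_H: G\to\{0,1\},\qquad f_H(g)=1\text{ iff }gc\text{ lies in that halfspace}.
\]
Each kernel $K_H(g,h):=(f_H(g)-f_H(h))^2$ is CND: for $\sum c_i=0$ a direct expansion gives
\[
\sum_{i,j} c_i\overline{c_j}\bigl(f_H(g_i)-f_H(g_j)\bigr)^2 = -2\,\Bigl|\sum_i c_i f_H(g_i)\Bigr|^2 \le 0.
\]
Note that $f_H(g)\ne f_H(h)$ precisely when the segment $[gc,hc]$ crosses $H$, so $K_H(g,h)=1$ exactly in that case. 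Applying property (iv) to $g^{-1}h$ and using that $\mathcal{H}$ is $G$-invariant (so the translate of $c$ by $g$ sees the same family of hyperplanes), the number of $H\in\mathcal{H}$ crossed by $[gc,hc]$ equals $\ell(g^{-1}h)$. Hence pointwise
\[
\ell(g^{-1}h)=\sum_{H\in\mathcal{H}} K_H(g,h),
\]
a sum that is actually finite at each pair $(g,h)$ by property (iii). A pointwise (convergent) sum of CND kernels is CND, so the kernel $(g,h)\mapsto\ell(g^{-1}h)$ is CND, i.e.\ $\ell$ is a CND function on $G$. Schoenberg's theorem then delivers positive definiteness of $\varphi_t=e^{-t\ell}$ for all $t>0$.

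The main obstacle is the bookkeeping in the middle step: one must justify that the formally infinite family $\{K_H\}_{H\in\mathcal H}$ yields a well-defined CND kernel summing to $\ell(g^{-1}h)$. The crossing characterization of $f_H$ together with property (iii), applied to the segment $[gc,hc]\subset U$ (both endpoints lie in the Tits cone because $c\in C\subset U$ and $U$ is $G$-invariant), guarantees that only finitely many $H$ contribute at any pair $(g,h)$, so the sum is an actual finite sum there and the CND property passes through. Property (iv) combined with the $G$-action on $\mathcal H$ then identifies this finite sum with $\ell(g^{-1}h)$.
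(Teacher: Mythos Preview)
Your argument is correct and is essentially the paper's own proof: both express $\ell(g^{-1}h)$ as $\sum_{H\in\mathcal H}|f_H(g)-f_H(h)|^2$ for a hyperplane-indexed family of $\{0,1\}$-valued functions on $G$ (the paper's $\chi_g(Z)=\mathbf 1_{[gc,c]\cap Z\neq\emptyset}$ is your $f_H$ when the chosen halfspace is the one not containing $c$), and then invoke Schoenberg. Your write-up is slightly more explicit about the CND computation and the finiteness via property~(iii), but the route is the same.
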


\begin{proof}
\begin{eqnarray*}{\la(g^{-1}h))}&=&{\card{\{ Z \in \mathcal{H} : [hc,gc]\cap Z
  \neq \emptyset\}}}\\
&=&{\sum_{Z\in \mathcal{H}} \abs{\chi_{h}(Z) - \chi_{g}(Z)}^2},
\end{eqnarray*}
where ${c\in C}$ is arbitrary and ${\chi_h}$ is the characteristic
function of
${N^h = \{ Z \in \mathcal{H} : [hc,c]\cap Z \neq \emptyset\}}$.\\
Hence ${\la(.)}$ is negative definite and, by a theorem of 
Schoenberg~\cite{Schoenberg38} 
(we only need the part already due to  Schur~\cite{Schur11}), 
${e^{-t\la(.)}}$ is positive definite, see e.g.~\cite[Theorem 7.8]{BF75}. 
\end{proof}

\section{Regular representation}

For functions $f,h: G \to \CC$ their convolution is defined by:
\[
f\ast h (y) = \sum_{x\in G} f(x)h(x^{-1}y).\]
For summable $f:G\to\CC$ we denote $\lambda(f):l^2(G)\to l^2(G)$ the
associated
convolution operator $\lambda(f)h= f\ast h$.
The regular (or reduced) $C^{\ast}$--algebra $C^{\ast}_{\lambda}(G)$ is the just the operator norm
closure of $\{\lambda(f):\,f\in l^1(G)\}$.
Denote for $g\in G$ $\delta_g$ the point mass one in $g\in G$ then
$\lambda(\delta_g)$ is just left translation by $g^{-1}$ on $l^2(G)$
and we are just dealing with the integrated version of the left regular
representation. Since for $A\in C^{\ast}_{\lambda}(G)$ there is a unique
$f=A\delta_e\in l^2$ we abuse notation to denote $A=\lambda(f)$.

The Tits cone with its devision by the hyperplanes can be seen as a 
subset of a cubical building. 
This allows to estimate certain convolution operator norms.
The first example of such an estimation was given for the free group on two generators by 
U.~Haagerup~\cite{Haa79} and accordingly such inequalities are called 
Haagerup inequality. Versions more appropriate for our purpose appear in
\cite{RRS,F03,Cha,Tal}:

\begin{theorem}\label{thm:haaiq}
A Coxeter group is a group of rapid decay:\\
there is $C>0$ and $k\in\NN$ such that
\[\norm{\lambda(f)}\leq C\left(\sum_g\abs{f(g)}^2(1+l(g))^{2k}\right)^{\frac{1}{2}}.\]
\end{theorem}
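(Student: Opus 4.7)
The plan is to follow the classical sphere-decomposition strategy of Haagerup. For $n \geq 0$, let $S_n = \{g \in G : \la(g) = n\}$ and let $f_n$ denote the restriction of $f$ to $S_n$, so that $f = \sum_n f_n$ orthogonally in $\ell^2(G)$. By the triangle inequality,
\[\norm{\lambda(f)} \leq \sum_{n \geq 0} \norm{\lambda(f_n)}.\]
Hence the theorem reduces to the \emph{sphere inequality}: there exist $C_0>0$ and $k_0\in\NN$ with
\[\norm{\lambda(f_n)} \leq C_0\,(1+n)^{k_0}\,\norm[2]{f_n}\]
for all $n$ and all $f_n$ supported on $S_n$. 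Indeed, once such an estimate is available, Cauchy--Schwarz with the weight $(1+n)^{-2}$ gives
\[\sum_n \norm{\lambda(f_n)} \leq C_0\Big(\sum_n (1+n)^{-2}\Big)^{1/2}\Big(\sum_n (1+n)^{2k_0+2}\,\norm[2]{f_n}^2\Big)^{1/2},\]
which is the claim with $k = k_0+1$.

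To prove the sphere inequality, I would estimate the trilinear form
\[\bigl|\langle \lambda(f_n)\, g_m,\,h_k\rangle\bigr| = \Bigl|\sum_{\substack{x\in S_n,\,y\in S_m\\ xy\in S_k}} f_n(x)\,g_m(y)\,\overline{h_k(xy)}\Bigr|\]
for functions $f_n, g_m, h_k$ supported on single spheres. Only triples with $|n-m|\leq k \leq n+m$ contribute. A double application of Cauchy--Schwarz reduces the bound to the following counting statement: for fixed $z \in S_k$, the number $N(n,m;z)$ of factorisations $z = xy$ with $x \in S_n$, $y \in S_m$ grows only polynomially in $n$ (uniformly in $z$ and in $m$). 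A polynomial bound $N(n,m;z) \leq P(n)$ then yields the sphere inequality with $k_0$ equal to the degree of $P$ divided by two, up to constants.

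The main obstacle is this combinatorial bound, and it is here that the wall geometry of Section 2 enters decisively. To each $g\in G$ one attaches the wall set $N^g = \{Z \in \mathcal{H} : [c,gc]\cap Z \neq \emptyset\}$, of cardinality $\la(g)$ by property (iv). A factorisation $z = xy$ corresponds geometrically to routing the combinatorial geodesic from $c$ to $zc$ through the chamber $xC$; the corresponding wall sets satisfy an inclusion of the form $N^z \subseteq N^x \cup x N^y$ with cancellations corresponding to walls traversed twice. Exploiting that the Tits cone embeds inside a CAT(0) cubical complex (the Davis--Moussong complex), as used in \cite{RRS,Cha,Tal}, one controls how many ways a geodesic wall sequence for $z$ can be split into an initial segment of length $n$ and a terminal segment of length $m$; this yields the required polynomial bound on $N(n,m;z)$. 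Substituting this back into the trilinear estimate and combining with the sphere decomposition completes the proof.
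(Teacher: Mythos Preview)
The paper does not actually give a proof of this theorem: it is quoted from the literature, with references \cite{RRS,F03,Cha,Tal} (and the original free-group case \cite{Haa79}). So there is no ``paper's own proof'' to compare against beyond those citations.

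Your outline is the standard Haagerup--Jolissaint strategy and is the one used in those references: decompose $f=\sum_n f_n$ over spheres, reduce via Cauchy--Schwarz to a sphere inequality $\norm{\lambda(f_n)}\le C_0(1+n)^{k_0}\norm[2]{f_n}$, and obtain the latter from a polynomial counting bound on factorisations $z=xy$ with prescribed lengths. You are also right that the crux is this combinatorial bound and that the wall/hyperplane structure (equivalently, the action on a CAT(0) cube complex via Niblo--Reeves, as exploited in \cite{Cha,Tal,RRS}) is what makes it go through for Coxeter groups. Two small caveats: first, passing from the trilinear estimate to the operator norm requires summing over the sphere decomposition of \emph{both} $g$ and $h$ and controlling the number of contributing triples $(n,m,k)$ --- this adds another polynomial factor in $n$, which is harmless but should be tracked; second, the bare counting bound $N(n,m;z)\le P(n)$ is not quite how the cited references phrase the geometric input (they use retraction/median arguments rather than literal factorisation counts), so if you want a self-contained proof you would need to flesh out that step rather than just gesturing at the CAT(0) embedding. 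As a sketch pointing to the literature, though, your write-up is accurate and matches what the paper is citing.
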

A consequence of this theorem is the following lemmata due to Haagerup~\cite{Haa79,Haa81}. For the readers convenience we recall their proofs.
\begin{lemma}
If $\varphi:G\to G$ is such that
$\sup_g \abs{\varphi(g)}(1 + l(g))^k < \infty$, then for all $\lambda(f)\in C^{\ast}_{\lambda}(G)$
\[\norm{\lambda(\varphi\cdot f)}\leq C \sup_g \abs{\varphi(g)}(1 + l(g))^k\norm{\lambda(f)}.\]
Here $C$ and $k$ are the constants in the Haagerup inequality. 
\end{lemma}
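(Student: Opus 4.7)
The plan is to apply the Haagerup inequality (Theorem~\ref{thm:haaiq}) directly to $\varphi\cdot f$, and then use the uniform bound $M:=\sup_g \abs{\varphi(g)}(1+\la(g))^k$ to extract $\varphi$ as a pointwise multiplier. That the constants $C$ and $k$ in the conclusion are exactly those of Theorem~\ref{thm:haaiq} is a hint that the proof should not involve any further weight shuffling beyond a single pointwise estimate.

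The first step observes that $\varphi\cdot f\in l^2(G)$: by the identification $f=\lambda(f)\delta_e\in l^2(G)$ and by $\abs{\varphi(g)}\leq M$, the pointwise bound
\[\abs{\varphi(g)f(g)}^2(1+\la(g))^{2k}=\bigl(\abs{\varphi(g)}(1+\la(g))^k\bigr)^2\abs{f(g)}^2\leq M^2\abs{f(g)}^2\]
is summable in $g$. Applying Theorem~\ref{thm:haaiq} with $\varphi\cdot f$ in place of $f$ therefore yields
\[\norm{\lambda(\varphi\cdot f)}\;\leq\; C\Bigl(\sum_g\abs{\varphi(g)}^2(1+\la(g))^{2k}\abs{f(g)}^2\Bigr)^{1/2}\;\leq\; CM\,\norm[2]{f}.\]

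The second step is the elementary bound $\norm[2]{f}\leq\norm{\lambda(f)}$, which holds because $f=\lambda(f)\delta_e$ and $\norm[2]{\delta_e}=1$. Chaining this with the previous display gives $\norm{\lambda(\varphi\cdot f)}\leq CM\,\norm{\lambda(f)}$, which is the desired inequality. There is no real obstacle here: the whole content of the argument is that the rapid-decay hypothesis (Theorem~\ref{thm:haaiq}) allows one to convert a pointwise growth bound on $\varphi$ into an operator-norm estimate, using only the harmless comparison between the $l^2$-norm of $f$ and the operator norm $\norm{\lambda(f)}$ in the reduced $C^{\ast}$-algebra.
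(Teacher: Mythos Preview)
Your proof is correct and follows exactly the same approach as the paper: apply the Haagerup inequality to $\varphi\cdot f$, pull out the pointwise bound $\sup_g\abs{\varphi(g)}(1+\la(g))^k$, and finish with $\norm[2]{f}\leq\norm{\lambda(f)}$. Your write-up is in fact slightly more careful than the paper's, since you verify explicitly that the weighted $l^2$-sum is finite before invoking Theorem~\ref{thm:haaiq}.
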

\begin{proof}
From $\lambda(f)\delta_e=f$ we have 
$\sum_g\abs{f(g)}^2)^{\frac{1}{2}}=\norm[2]{f} \leq\norm{\lambda(f)}$.
and by the Haagerup inequality:
\begin{eqnarray*}
\norm{\lambda(\varphi\cdot f)}^2&\leq&
C \sum_g\abs{\varphi(g)f(g)}^2(1+l(g))^{2k}\\
&\leq&\sum_g\abs{f(g)}^2C \sup_g \abs{\varphi(g)}^2(1 + l(g))^{2k}
\end{eqnarray*}
\end{proof}
\begin{lemma}
There is a sequence of finitely supported functions $(\psi_m)$ 

such that
for $\lambda(f)\in C^{\ast}_{\lambda}(G)$:

\begin{itemize}
\item $\lambda(\psi_m\cdot f) \to \lambda(f), \mbox{ as } n\to\infty$

\item $\norm{\lambda(\psi_m\cdot f)}\leq 3\norm{\lambda(f)}$ 
\end{itemize}
\end{lemma}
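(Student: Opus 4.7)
I would follow Haagerup's classical truncation argument. The plan is to construct $\psi_m$ by combining the positive definite functions $\varphi_t(g)=e^{-t\la(g)}$ supplied by Theorem \ref{thm:length} with a sharp cutoff onto a word-length ball.

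For parameters $t>0$ and $N\in\NN$, set
\[\psi_{t,N}(g)=e^{-t\la(g)}\chi_{\{\la\leq N\}}(g);\]
each $\psi_{t,N}$ is finitely supported because word-metric balls in a finitely generated group are finite. Decompose
\[\psi_{t,N}\cdot f \;=\; \varphi_t\cdot f \;-\; \bigl(\varphi_t(1-\chi_{\{\la\leq N\}})\bigr)\cdot f.\]
Since $\varphi_t$ is positive definite with $\varphi_t(\e)=1$, multiplication by $\varphi_t$ is a contraction on $C^{\ast}_{\lambda}(G)$, so $\norm{\lambda(\varphi_t\cdot f)}\leq \norm{\lambda(f)}$. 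For the second summand the preceding lemma applied to $\varphi_t(1-\chi_{\{\la\leq N\}})$ gives
\[\norm{\lambda\bigl(\varphi_t(1-\chi_{\{\la\leq N\}})\cdot f\bigr)}\leq C\Bigl(\sup_{n>N} e^{-tn}(1+n)^k\Bigr)\norm{\lambda(f)}.\]
I would then pick $t_m=1/m$ and afterwards choose $N_m$ large enough that $C\sup_{n>N_m}e^{-n/m}(1+n)^k\leq 2$; setting $\psi_m:=\psi_{t_m,N_m}$ and applying the triangle inequality yields $\norm{\lambda(\psi_m\cdot f)}\leq 3\norm{\lambda(f)}$ for every $\lambda(f)\in C^{\ast}_{\lambda}(G)$.

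For the convergence assertion I would verify it first on the dense subspace of finitely supported convolvers: if $f$ has finite support $F$, then $F\subset\{\la\leq N_m\}$ for $m$ sufficiently large and $\psi_m|_F\to 1$ uniformly, so that $\psi_m\cdot f\to f$ in $l^1(G)$ and hence $\lambda(\psi_m\cdot f)\to \lambda(f)$ in operator norm. The general case follows from a routine three-$\epsilon$ argument combining this pointwise convergence on a dense subset with the uniform operator bound $3$ just established.

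The main delicate point is the coupled choice of $t_m$ and $N_m$: one needs $t_m\to 0$ so that $\varphi_{t_m}\to 1$ pointwise, yet $N_m$ must be chosen \emph{after} $t_m$ so that the polynomial weight $(1+n)^k$ coming from the Haagerup inequality (Theorem \ref{thm:haaiq}) is dominated by the exponential $e^{-t_m n}$ on the tail $n>N_m$. Fixing $t_m$ before picking $N_m$ turns this balancing act into a routine choice.
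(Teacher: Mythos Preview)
Your proposal is correct and follows essentially the same route as the paper: the functions $\psi_{t,N}$ you define are exactly the paper's $\varphi_{n,t}$, and your decomposition into the contractive multiplier $\varphi_t$ plus a tail controlled by the preceding multiplier lemma is precisely the paper's estimate $\norm{\lambda(\varphi_{n,t}\cdot f)-\lambda(f)}\leq C\sup_{l>n}e^{-tl}(1+l)^k\norm{\lambda(f)}+\norm{\lambda(\varphi_t\cdot f)-\lambda(f)}$, after which both you and the paper choose $t$ small and then $n$ large. The only cosmetic difference is that you spell out the convergence via a density plus uniform-bound three-$\epsilon$ argument, whereas the paper leaves this implicit in the phrase ``we can extract the $\psi_m$ from the $\varphi_{n,t}$''.
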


\begin{proof}
Since, by theorem~\ref{thm:length},
the functions $\varphi_t$ are positive definite, they define contractive (i.e. norm non-increasing)
multiüpliers on the regular $C^*$-algebra.
Let
\[\varphi_{n,t}=\left\{\begin{array}{ll}
e^{-tl(g)}& \mbox{ if }l(g)\leq n\\
0&\mbox{ else }
\end{array}\right.\]
Then 
\begin{eqnarray*}
\norm{\lambda(\varphi_{n,t}\cdot f)-\lambda(f)}&\leq&
\norm{\lambda(\varphi_{n,t}\cdot f)-\lambda(\varphi_{t} \cdot f)}+\\
&&\norm{\lambda(\varphi_{t}\cdot f)-\lambda(f)}\\
&\leq& C \sup_{l>n}{e^{-tl}(1+l)^k}\norm{\lambda(f)}+\\
&&
\norm{\lambda(\varphi_{t}\cdot f)-\lambda(f)}
\end{eqnarray*}

Since $\sup_{l>n}{e^{-tl}(1+l)^k}\to 0$ as $n\to\infty$
we can extract the $\psi_m$ from the $\varphi_{n,t}$.
\end{proof}

\section{Relatively compact sets}
First we notice that the  positive definite functions
$\varphi_t:g\mapsto e^{-tl(g)}$ define a
$C_0$-semigroup of multipliers on $C^{\ast}_{\lambda}(G)$
given by
$M_t:C^{\ast}_{\lambda}(G)\to C^{\ast}_{\lambda}(G)$, 
$\lambda(f)\mapsto \lambda(\varphi_t \cdot f)$.

\begin{lemma}
$M:t\mapsto M_t$ is a $C_0$-semigroup of
contractions on $C^{\ast}_{\lambda}(G)$.
\end{lemma}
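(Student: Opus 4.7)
The proof has three parts: each $M_t$ is a well-defined contraction, the family is a semigroup, and it is strongly continuous at $0$.

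First, I would establish contractivity. By Theorem~\ref{thm:length}, $\varphi_t$ is positive definite, and $\varphi_t(e)=1$, so $\varphi_t$ is a normalized positive definite function. Standard multiplier theory (which is also invoked implicitly in the proof of the previous lemma, where $\varphi_t$ is called ``contractive'') then says that $\lambda(f)\mapsto \lambda(\varphi_t\cdot f)$, a priori defined on $\lambda(\ell^1(G))$, is completely positive and unital, hence contractive, and therefore extends by continuity to a contraction $M_t$ on all of $C^{\ast}_{\lambda}(G)$. This justifies that $M_t$ is a well-defined bounded operator with $\|M_t\|\le 1$.

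Next, the semigroup property. Because $\varphi_{t+s}(g)=e^{-(t+s)l(g)}=\varphi_t(g)\varphi_s(g)$ for every $g\in G$, we have $M_{t+s}\lambda(f)=\lambda(\varphi_{t+s}\cdot f)=\lambda(\varphi_t\cdot(\varphi_s\cdot f))=M_tM_s\lambda(f)$ on the dense subalgebra of finitely supported convolutions, and then on all of $C^{\ast}_{\lambda}(G)$ by the already-established contractivity. Likewise $M_0=\mathrm{Id}$ because $\varphi_0\equiv 1$.

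For the $C_0$-property one must show $\|M_tA-A\|\to 0$ as $t\to 0^+$ for every $A\in C^{\ast}_{\lambda}(G)$. On the dense subspace of finite linear combinations of $\lambda(\delta_g)$ this is immediate: for such $A=\sum_{g\in F}c_g\lambda(\delta_g)$ with finite $F\subset G$,
\[
\|M_tA-A\|\le \sum_{g\in F}|c_g|\,\bigl|e^{-tl(g)}-1\bigr|\xrightarrow[t\to 0^+]{}0.
\]
For a general $A\in C^{\ast}_{\lambda}(G)$ and $\epsilon>0$, the previous lemma produces a finitely supported $\psi_m\cdot f$ (where $A=\lambda(f)$ in the sense of the paper) with $\|A-A_m\|<\epsilon$, where $A_m=\lambda(\psi_m\cdot f)$ is a finite linear combination of the $\lambda(\delta_g)$. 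Then the standard three-term split
\[
\|M_tA-A\|\le \|M_tA-M_tA_m\|+\|M_tA_m-A_m\|+\|A_m-A\|\le 2\epsilon+\|M_tA_m-A_m\|
\]
(using $\|M_t\|\le 1$) reduces everything to the finitely supported case already handled, so $\limsup_{t\to 0^+}\|M_tA-A\|\le 2\epsilon$, and $\epsilon$ was arbitrary.

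The only real subtlety is the very first step, passing from the formal definition $\lambda(f)\mapsto\lambda(\varphi_t\cdot f)$ on $\ell^1$ to a bona fide bounded operator on the $C^*$-algebra; but this is precisely what positive-definiteness of $\varphi_t$ buys, so once Theorem~\ref{thm:length} is in hand there is no genuine obstacle, only bookkeeping.
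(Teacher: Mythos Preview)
Your proof is correct and follows exactly the same route as the paper: contractivity from positive definiteness of $\varphi_t$ with $\varphi_t(e)=1$, the semigroup law from $\varphi_{t+s}=\varphi_t\varphi_s$, and strong continuity by checking it on finitely supported $f$ and then approximating. The paper's proof is just a two-line sketch of precisely this argument (``for finitely supported $f$ everything is elementary and now an approximation proves the assertion''), so you have simply written out the details it suppresses.
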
 
\begin{proof}
Since $\varphi_t$ is positive definite
\[\norm{M_t}=\varphi_t(e)=1.\]
For finitely supported $f$ everything is elementary and now an approximation proves the assertion.
\end{proof}

\begin{lemma}
The generator $D$ of the semigroup $M_t$
is given by
\[D(\lambda(f))= - \lambda(l\cdot f)\]
\[\mbox{Dom}(D)=
\left\{\lambda(f):\lambda(l\cdot f)\in C^{\ast}_{\lambda}(G)\right\}\]
\end{lemma}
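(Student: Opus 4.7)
The plan is to prove both inclusions between $D$ and the candidate operator $D_0$, where $D_0\lambda(f) = -\lambda(l\cdot f)$ on $\mbox{Dom}(D_0) = \{\lambda(f): \lambda(l\cdot f)\in C^*_\lambda(G)\}$.

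For $D_0\subset D$, fix $\lambda(f)\in\mbox{Dom}(D_0)$ and set $B := \lambda(l\cdot f)$. A direct expansion yields
\[
\frac{M_t\lambda(f)-\lambda(f)}{t} + \lambda(l\cdot f) = \lambda(\psi_t\cdot f),\qquad \psi_t(g) := l(g) - \tfrac{1-e^{-tl(g)}}{t}.
\]
The key identity is $\tfrac{1-e^{-tl(g)}}{tl(g)} = \tfrac{1}{t}\int_0^t\varphi_s(g)\,ds$ (both sides equal $1$ at $g=\e$), which factors $\psi_t$ as $l(g)$ times the multiplier function $1-\tfrac{1}{t}\int_0^t\varphi_s\,ds$, and hence gives the operator identity
\[
\lambda(\psi_t\cdot f) = B - \tfrac{1}{t}\int_0^t M_s B\,ds.
\]
By strong continuity of the $C_0$-semigroup $M$, the right-hand side tends to $0$ in norm as $t\to 0^+$, so $\lambda(f)\in\mbox{Dom}(D)$ with $D\lambda(f) = -\lambda(l\cdot f)$.

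For $D\subset D_0$, I would use that $M$ is contractive, so $1\in\rho(D)$ with resolvent given by the Laplace transform $R(1,D)A = \int_0^\infty e^{-t}M_tA\,dt$ (Bochner integral, norm-convergent because $\|M_t\|\leq 1$). For $A=\lambda(f_0)\in C^*_\lambda(G)$, evaluating at $\delta_\e$ and using $\int_0^\infty e^{-t}e^{-tl(g)}\,dt = 1/(1+l(g))$ identifies $R(1,D)\lambda(f_0) = \lambda\bigl(f_0/(1+l)\bigr)$. Then
\[
\lambda\bigl(l\cdot f_0/(1+l)\bigr) = \lambda(f_0) - \lambda\bigl(f_0/(1+l)\bigr) \in C^*_\lambda(G),
\]
so $R(1,D)$ actually maps into $\mbox{Dom}(D_0)$; in particular $(1-D_0):\mbox{Dom}(D_0)\to C^*_\lambda(G)$ is already surjective. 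For an arbitrary $\lambda(f)\in\mbox{Dom}(D)$, set $\lambda(h) := R(1,D)\bigl((1-D)\lambda(f)\bigr)\in\mbox{Dom}(D_0)\subset\mbox{Dom}(D)$. Then $(1-D)(\lambda(f)-\lambda(h))=0$, and injectivity of $1-D$ forces $\lambda(f)=\lambda(h)\in\mbox{Dom}(D_0)$.

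The only substantive step is the factorisation of $\psi_t$ in the first inclusion, which recasts the error in the difference quotient as a time-average of the semigroup applied to the candidate image $\lambda(l\cdot f)$. Once this is in hand, contractivity and standard Hille--Yosida machinery reduce both directions to formal manipulations.
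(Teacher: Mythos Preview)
Your proof is correct and takes a genuinely different route from the paper's. The paper verifies the formula first on finitely supported $f$ (each $\lambda(\delta_g)$ being an eigenvector of $M_t$), then invokes the closed graph of $D$ together with the finitely supported approximants $\psi_m$ from the preceding lemma---which in turn rests on the Haagerup inequality---to conclude that $(\lambda(f),-\lambda(l\cdot f))$ lies in the graph of $D$ whenever $\lambda(l\cdot f)\in C^{\ast}_{\lambda}(G)$. The reverse inclusion $\mbox{Dom}(D)\subset\mbox{Dom}(D_0)$ is not written out there.

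Your argument, by contrast, is pure $C_0$-semigroup theory and never touches the Haagerup inequality: for $D_0\subset D$ you recast the difference-quotient error as the time average $B-\tfrac{1}{t}\int_0^t M_sB\,ds$, and for $D\subset D_0$ you compute the resolvent explicitly as multiplication by $(1+l)^{-1}$. This is more self-contained and handles both inclusions. The paper's approach is shorter because it reuses the approximation machinery already set up for other purposes; its implicit second inclusion can be recovered either by noting that the finitely supported functions are a core for $D$ (dense and $M_t$-invariant) and that $D_0$ is closed (evaluate at $\delta_e$), or directly by your resolvent argument.
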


\begin{proof}
We have
\[\lambda(\varphi_{t}\cdot \delta_{g})=e^{-t \la (g)}\lambda(\delta_{g}),\]
hence the assertion is clear for finitely supported $f=\sum_g f(g)\delta_g$.

Now as a generator of a $C_0$-contraction semi group the operator $D$ 
has a closed graph.
But if
\[\lambda(f) \mbox{ and } \lambda(\la\cdot f) \in C^{\ast}_{\lambda}(G),\]
then for the finitely supported $\psi_m$ as above:
\[\lambda(\psi_m\cdot f)\to \lambda(f)\]
and
\[\lambda(\psi_m\cdot\la \cdot f)\to \lambda(\la \cdot f).\]
\end{proof}

\begin{proof}[Proof of Theorem~\ref{thm:main}]
We shall show that for $\epsilon>0$ there exists a finite dimensional
 bounded set
\[\tilde{\mathcal{K}}_{\epsilon}\subset C^{\ast}_{\lambda}(G)\]
such that for all $f\in  \mathcal{K}$
\[ \mbox{dist}(f,\mathcal{\tilde{K}}_{\epsilon})\leq \epsilon.\]
(this show that $\mathcal{K}$ is totally bounded)

We have for $f\in\mathcal{K}$:
\[\lambda(\varphi_t\cdot f)-\lambda(f)=M_t(\lambda(f))-\lambda(f)=
\int_0^t D(M_s(\lambda(f))\,ds\]
Hence
\begin{eqnarray*}
\norm{\lambda(\varphi_t\cdot f)-\lambda(f)}&\leq&
t \sup_{s<t}\norm{\lambda(\la e^{-s\la}\cdot f)}\\
&\leq&
t \norm{\lambda(\la\cdot f)}\;\leq \;t.
\end{eqnarray*}

and
\begin{eqnarray*}
\norm{\lambda(\varphi_t\cdot f)-\lambda(\varphi_{n,t}\cdot f)}&\leq&
C \sup_{l>n}e^{-tl}(1+l)^k\norm{\lambda(f)}\\
&\leq&C \sup_{l>n}e^{-tl}(1+l)^k
\end{eqnarray*}

taking first $t$ small and then $n$ large we have an approximation to
$\lambda(f)$ by certain $\lambda(\varphi_{n,t}\cdot f)$ up to $\epsilon$
uniformly in $\lambda(f)\in \mathcal{K}$. Further for this $n$
\begin{eqnarray*}
\norm{\lambda(\varphi_{n,t}\cdot f)}&\leq&
\norm{\lambda(\varphi_t\cdot f)-\lambda(\varphi_{n,t}\cdot f)}+
\norm{\lambda(\varphi_t\cdot f)}\\
&\leq&C (\sup_{l>n}e^{-tl}(1+l)^k+1)\norm{\lambda(f)}\\
&\leq&C (\sup_{l>n}e^{-tl}(1+l)^k+1)
\end{eqnarray*}
So these $\lambda(\varphi_{n,t}\cdot f)$ are from a bounded 
set and all have their support in words of length at most $n$.
The functions with support in this finite set give rise to a finite 
dimensional subspace of $C^{\ast}_{\lambda}(G)$.
\end{proof}

\bibliographystyle{abbrv}

\end{document}